\newtheorem{theorem}{Theorem}[section]
\newtheorem*{theorem*}{Theorem}
\theoremstyle{definition}
\theoremstyle{remark}
\newtheorem{remark}[theorem]{Remark}
\numberwithin{equation}{section}
\DeclareMathOperator{\sech}{sech}
\g@addto@macro{\endabstract}{\@setabstract}
\newcommand{\authorfootnotes}{\renewcommand\thefootnote{\@fnsymbol\c@footnote}}%
\newcommand\blfootnote[1]{%
  \begingroup
  \renewcommand\thefootnote{}\footnote{#1}%
  \addtocounter{footnote}{-1}%
  \endgroup
}
\author{J.~Aguilar}
\address{Department of Mathematics, University of Georgia, Athens, GA 30602.}
\email{jacobaguilar@uga.edu} 
\author{M. ~M.Tom}
\address{Department of Mathematics, Louisiana State University, Baton Rouge, LA 70803.}
\email{tom@math.lsu.edu}
\begin{document}

\begin{center}
  \LARGE 
 Convergence of the solutions of the BBM-KP and the BBM model equations \par \bigskip

  \normalsize
  \authorfootnotes
  Jacob B. Aguilar and Michael ~M. Tom \par\bigskip

\end{center}


\blfootnote{AMS Subject Classifications: 35Q10.}
\begin{abstract}
It is shown that the solution of the Cauchy problem for the BBM-KP equation converges
to the solution of the Cauchy problem for the BBM equation in a suitable function space whenever
the initial data for both equations are close as the transverse variable $y \rightarrow \pm \infty$.
\end{abstract}

\section{Introduction}

Considered in this paper are the pure-initial-value problem for the Benjamin-Bona-Mahony (BBM) equation
\bigskip
\begin{equation}
\begin{cases}
u_{t} + u_{x} + uu_{x} - u_{xxt} = 0, & x\in \mathbb{R}, t>0,\\
u(x,0)=\phi(x) &
\end{cases}
\label{1.1}
\end{equation}
\bigskip

\noindent and the Benjamin-Bona-Mahony-Kadomtsev-Petviashvili (BBM-KP) equation

\bigskip 
\begin{equation}
\begin{cases}
(\eta_{t} + \eta_{x} + \eta \eta_{x} - \eta_{xxt})_x + \gamma \eta_{yy} = 0, & (x,y)\in\mathbb{R}^2, t>0,\\
\eta(x,y,0)=\psi(x,y)
\end{cases}
\label{1.2}
\end{equation}
\bigskip

where $\gamma = \pm1.$

\bigskip

The BBM (\ref{1.1}), introduced in \cite{BBM}, has proven to be a good approximation for the unidirectional propagation of small amplitude long waves
in a channel where variation across the channel can be safely ignored. 
\\

The BBM-KP (\ref{1.2}) is the regularized version of the usual KP equation which arises in various contexts where nonlinear dispersive waves propagate 
principally along the $x$-axis, but with weak dispersive effects being felt in the direction parallel to the $y$-axis, perpendicular to the main
direction of propagation. 
\\

Our goal in this paper is the show that if the difference between the initial data $\psi(x,y)$ and $\phi(x)$ is small as $y \rightarrow +\infty$, in a certain function space, then the solution to the Cauchy problem (\ref{1.2}) converges to the solution of the Cauchy problem (\ref{1.1})
in a suitable function space, as $y \rightarrow + \infty$. In a similar fashion, the result holds true for $y \rightarrow -\infty.$
\\

The Cauchy problems for both equations have been studied by various authors. Of recent, Bona and Tzvetkov \cite{BT} proved that the Cauchy problem (\ref{1.1}) is 
globally well-posed in $H^s(\mathbb{R})$ for $s \geq 0.$ Bona et al. \cite{BLT} has shown that the Cauchy problem (\ref{1.2}) can be solved by Picard Iteration yielding to local and 
global well-posedness results. In particular, it is shown that the pure initial-value problem (\ref{1.2}), regardless of the sign of $\gamma$, is globally well-posed in

$$W_1(\mathbb{R}^2)=\{\psi \in L^2(\mathbb{R}^2):  ||\psi||_{L^2} + ||\psi_x||_{L^2}  + ||\psi_{xx}||_{L^2}+ ||\partial^{-1}_x \partial_y \psi||_{L^2}+ ||\psi_y||_{L^2} < \infty \}.$$

Saut and Tzvetkov \cite{ST} improved this global well-posedness in the space

$$Y=\{\psi \in L^2(\mathbb{R}^2):\psi_x \in L^2(\mathbb{R}^2)\}.$$

We will employ the following notation. Let $H^s(\mathbb{R}^2)$ denote the classical Sobolev space $H^s(\mathbb{R}^2)$ equipped with the norm

$$||\eta||_s = \Big(\int_{\mathbb{R}^2}(1 + \mu^2 + \xi^2)^s|\hat{\eta}(\xi,\mu)|^2 d\xi d \mu \Big)^\frac{1}{2}.$$

Analogously, $H^k_x(\mathbb{R})$ will denote the Sobolev space in just the spatial variable $x$ with the norm

$$||f||_{H^k_x(\mathbb{R})} = \Big(\int_{\mathbb{R}}(1 + \xi^2)^k|\hat{f}(\xi)|^2 d \xi \Big)^\frac{1}{2}.$$

Define the space

$$H^s_{-1}(\mathbb{R}^2)=\{\eta \in S'(\mathbb{R}^2):||\eta||_{H^s_{-1}(\mathbb{R}^2)} < \infty\}$$

equipped with the norm

$$||\eta||_{H^s_{-1}(\mathbb{R}^2)} = \Big(\int_{\mathbb{R}^2}(1+|\xi|^{-1})^2(1 + \xi^2 + \mu^2)^s|\hat{\eta}(\xi,\mu)|^2 d\xi d \mu \Big)^\frac{1}{2}.$$

We now summarize the existence theory for both initial-value problems. For the proofs refer to  \cite{BT} and \cite{BLT} respectively.

\begin{theorem} 
Fix $s \geq 0$. For any $\phi \in H^s(\mathbb{R})$, there exists a $T=T(||\phi||_{H^s}) > 0$ and a unique solution
$u \in C([0,T]; H^s(\mathbb{R}))$ of the IVP (\ref{1.1}). 

Moreover, for $R>0$, let $B_R$ connote a ball of radius $R$ centered at the origin in $H^s(\mathbb{R})$ and let $T=T(R) >0$ denote a uniform
existence time for the IVP (\ref{1.1}) with $\phi \in B_R$. Then the correspondence $\phi \mapsto u$ which associates to $\phi$ the solution
$u$ of the IVP (\ref{1.1}) with initial data $\phi$ is a real analytic mapping of $B_R$ to $C([T,-T]; H^s(\mathbb{R}))$.
\end{theorem}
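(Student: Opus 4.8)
The plan is to recast the initial-value problem (\ref{1.1}) as an integral equation, solve it by a contraction-mapping argument, and then deduce the real-analytic dependence on the data from the analytic version of that argument. Applying $(1-\partial_x^2)^{-1}$ to the equation in (\ref{1.1}) and writing $u_{xxt}=\partial_x^2 u_t$, one obtains $u_t=-\partial_x(1-\partial_x^2)^{-1}\bigl(u+\tfrac12u^2\bigr)$, so that (\ref{1.1}) is equivalent, at the level of $C([-T,T];H^s(\mathbb{R}))$ solutions, to the fixed-point equation
\begin{equation*}
u(t)=\Phi_\phi(u)(t):=\phi-\int_0^t\partial_x(1-\partial_x^2)^{-1}\Bigl(u(\tau)+\tfrac12u(\tau)^2\Bigr)\,d\tau .
\end{equation*}
The Fourier multiplier $\partial_x(1-\partial_x^2)^{-1}$ has symbol $i\xi/(1+\xi^2)$, and since $\langle\xi\rangle\,|\xi|/(1+\xi^2)$ is bounded it maps $H^{\sigma-1}(\mathbb{R})\to H^{\sigma}(\mathbb{R})$ boundedly for every $\sigma$; in particular it gains one derivative and maps $H^s$ into itself. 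Hence the entire nonlinear estimate reduces to the bilinear inequality
\begin{equation*}
\|uv\|_{H^{s-1}(\mathbb{R})}\le C_s\|u\|_{H^s(\mathbb{R})}\|v\|_{H^s(\mathbb{R})},\qquad s\ge 0 ,
\end{equation*}
which I would establish via the one-dimensional Sobolev product theorem: for $s>\tfrac12$ it follows at once from the fact that $H^s(\mathbb{R})$ is a Banach algebra together with $H^s\hookrightarrow H^{s-1}$, and for $0\le s\le\tfrac12$ from a Littlewood–Paley decomposition into frequency blocks (or, at the endpoint $s=0$, directly from $\widehat{uv}=\hat u\ast\hat v\in L^\infty$ and $\int\langle\xi\rangle^{-2}\,d\xi<\infty$).

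Granting this, fix $R>0$ and let $X_T=C([-T,T];H^s(\mathbb{R}))$ with the norm $\sup_{|t|\le T}\|\cdot\|_{H^s}$. For $\|\phi\|_{H^s}\le R$ I would verify that $\Phi_\phi$ maps the closed ball of radius $2R$ in $X_T$ into itself and satisfies
\begin{equation*}
\|\Phi_\phi(u)-\Phi_\phi(v)\|_{X_T}\le C\,T\,\bigl(1+\|u\|_{X_T}+\|v\|_{X_T}\bigr)\|u-v\|_{X_T},
\end{equation*}
so that choosing $T=T(R)$ with $C\,T(1+4R)\le\tfrac12$ turns $\Phi_\phi$ into a contraction on that ball. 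Banach's fixed-point theorem then produces a unique fixed point $u\in X_T$; unwinding the equivalence shows that $u\in C([-T,T];H^s(\mathbb{R}))$ solves (\ref{1.1}), and uniqueness within $C([0,T];H^s)$ follows from the same bilinear estimate combined with Gronwall's lemma applied to $w=u-v$.

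For the analyticity of $\phi\mapsto u$, observe that for fixed $T$ the map $(\phi,u)\mapsto\Phi_\phi(u)$ is a polynomial of degree two in $u$ and affine in $\phi$, hence an entire analytic map $H^s(\mathbb{R})\times X_T\to X_T$, and that the contraction estimate makes it a uniform contraction in $u$ on the ball of radius $2R$ as $\phi$ ranges over $B_R$. The analytic uniform-contraction principle — equivalently, the analytic implicit function theorem applied to $F(\phi,u):=u-\Phi_\phi(u)=0$, whose partial differential $I-\partial_u\Phi_\phi$ is invertible by the contraction bound — then yields that the solution map $\phi\mapsto u$ is real analytic from $B_R$ into $C([-T,T];H^s(\mathbb{R}))$, which is the assertion. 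I expect the one genuinely technical point to be the bilinear estimate in the low-regularity range $0\le s\le\tfrac12$, where $H^s(\mathbb{R})$ is no longer an algebra; everything else is the routine machinery of fixed-point and analytic-dependence arguments.
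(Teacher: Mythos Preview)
The paper does not actually prove this theorem: immediately before stating it, the authors write ``For the proofs refer to \cite{BT} and \cite{BLT} respectively,'' so Theorem~1.1 is quoted from Bona--Tzvetkov without argument. Your proposal is essentially the Bona--Tzvetkov proof: rewrite (\ref{1.1}) as the Duhamel fixed-point equation for $\Phi_\phi$, use that the multiplier $i\xi/(1+\xi^2)$ gains a derivative, reduce to the bilinear estimate $\|uv\|_{H^{s-1}}\lesssim\|u\|_{H^s}\|v\|_{H^s}$ for $s\ge 0$, and close by contraction, with analyticity following from the polynomial structure of $\Phi_\phi$ and the analytic implicit function theorem. So there is nothing to compare against in the present paper, and your outline matches the cited source; the only step requiring genuine care, as you correctly flag, is the bilinear estimate in the range $0\le s\le\tfrac12$, which in \cite{BT} is handled by an elementary convolution inequality on the Fourier side rather than a full Littlewood--Paley decomposition.
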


The above theorem improves the earlier known result proven by Benjamin et. al. \cite{BBM} where the IVP (\ref{1.1}) was shown to be
globally well-posed for data in $H^k$, $k\in \mathbb{Z}$ such that $k \geq 1$. For the Cauchy problem (\ref{1.2}), we have

\begin{theorem}
 Let $\psi \in H^s_{-1}(\mathbb{R}^2)$ with $s > \frac{3}{2}$. Then there exist a $T_0$ such that the initial-value problem 
(\ref{1.2}) has a unique solution $\eta \in C([0,T]; H^s_{-1}(\mathbb{R}^2))$, $\partial^{-1}_x\eta_y \in C([0,T]; H^{s-1}_{-1}(\mathbb{R}^2))$,
with \ \  $\eta_t \in C([0,T]; H^{s-2}(\mathbb{R}^2))$. Moreover, the map $\psi \rightarrow \eta$ is continuous from  $H^s_{-1}(\mathbb{R}^2)$
to $C([0,T_0]; H^s_{-1}(\mathbb{R}^2)).$

\end{theorem}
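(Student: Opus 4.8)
\noindent\emph{Sketch of proof.} The plan is to recast \eqref{1.2} as an integral equation and solve it by the contraction mapping principle in $X_{T_0}:=C([0,T_0];H^s_{-1}(\mathbb{R}^2))$. Applying $\partial^{-1}_x$ to \eqref{1.2} gives $\eta_t+\eta_x+\eta\eta_x-\eta_{xxt}+\gamma\,\partial^{-1}_x\eta_{yy}=0$, hence
\[
\eta_t=-(1-\partial_x^2)^{-1}\Big[\eta_x+\tfrac12\partial_x(\eta^2)+\gamma\,\partial^{-1}_x\eta_{yy}\Big].
\]
On the Fourier side the linear flow is multiplication by $e^{it\,p(\xi,\mu)}$ with the \emph{real} symbol $p(\xi,\mu)=(1+\xi^2)^{-1}\big(\gamma\mu^2/\xi-\xi\big)$; since $|e^{itp}|\equiv1$ and $p$ is finite a.e., the family $E(t)$ it generates is a strongly continuous one-parameter group of unitary operators on $H^s_{-1}(\mathbb{R}^2)$ and on every $H^\sigma(\mathbb{R}^2)$ (strong continuity by dominated convergence). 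Duhamel's formula then recasts \eqref{1.2} as the fixed-point problem
\[
\eta(t)=\Phi(\eta)(t):=E(t)\psi-\tfrac12\int_0^t E(t-\tau)\,(1-\partial_x^2)^{-1}\partial_x\big(\eta^2(\tau)\big)\,d\tau .
\]

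The heart of the matter is the nonlinear estimate. Because $(1+|\xi|^{-1})\ge1$ one has $\|f\|_{H^\sigma}\le\|f\|_{H^\sigma_{-1}}$, and since $s>1$ the space $H^s(\mathbb{R}^2)$ is a Banach algebra, so $\|\eta^2\|_{s}\lesssim\|\eta\|_{H^s_{-1}}^2$ and $\|\eta^2-\zeta^2\|_{s}\lesssim\big(\|\eta\|_{H^s_{-1}}+\|\zeta\|_{H^s_{-1}}\big)\|\eta-\zeta\|_{H^s_{-1}}$. The operator $(1-\partial_x^2)^{-1}\partial_x$ has Fourier symbol $i\xi/(1+\xi^2)$, and the pointwise bound
\[
(1+|\xi|^{-1})^2\,\frac{\xi^2}{(1+\xi^2)^2}=\frac{(1+|\xi|)^2}{(1+\xi^2)^2}\le C
\]
shows it maps $H^s(\mathbb{R}^2)$ \emph{boundedly into} $H^s_{-1}(\mathbb{R}^2)$: the $\partial_x$ standing in front of the nonlinearity exactly cancels the $|\xi|^{-1}$ singularity carried by the weight, so $\Phi$ preserves $H^s_{-1}$ even though $\eta^2$ itself need not have vanishing $x$-mean. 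Consequently $\|\Phi(\eta)(t)-E(t)\psi\|_{H^s_{-1}}\lesssim\int_0^t\|\eta(\tau)\|_{H^s_{-1}}^2\,d\tau$, and the analogous Lipschitz bound for $\Phi(\eta)-\Phi(\zeta)$ holds.

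Now run the contraction. Let $B_R\subset X_{T_0}$ be the closed ball of radius $R=2\|\psi\|_{H^s_{-1}}$ about the origin. Since each $E(t)$ is an isometry of $H^s_{-1}(\mathbb{R}^2)$, the estimates above give $\|\Phi(\eta)\|_{X_{T_0}}\le\|\psi\|_{H^s_{-1}}+CT_0R^2$ and $\|\Phi(\eta)-\Phi(\zeta)\|_{X_{T_0}}\le CT_0R\,\|\eta-\zeta\|_{X_{T_0}}$ for $\eta,\zeta\in B_R$; choosing $T_0=T_0(\|\psi\|_{H^s_{-1}})$ so small that $CT_0R\le\tfrac12$ turns $\Phi$ into a contraction of $B_R$ into itself, whose unique fixed point $\eta\in C([0,T_0];H^s_{-1}(\mathbb{R}^2))$ solves \eqref{1.2}. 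Uniqueness in the whole space and continuity of the data-to-solution map $\psi\mapsto\eta$ follow from the same Lipschitz estimate applied to the difference of two solutions (a Gronwall argument, with the existence time uniform on bounded subsets of data). The remaining regularity is read off from the evolutionary form: each term of $-(1-\partial_x^2)^{-1}[\eta_x+\tfrac12\partial_x(\eta^2)+\gamma\,\partial^{-1}_x\eta_{yy}]$ lies in $C([0,T_0];H^{s-2}(\mathbb{R}^2))$, the delicate one being the last, where $\xi^{-2}(1+\xi^2+\mu^2)^{s}\le(1+|\xi|^{-1})^2(1+\xi^2+\mu^2)^{s}$ lets one absorb $\partial^{-1}_x$ at the cost of two derivatives; this gives $\eta_t\in C([0,T_0];H^{s-2}(\mathbb{R}^2))$. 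Applying $\partial^{-1}_x\partial_y$ to the Duhamel formula and repeating the symbol bookkeeping --- the data contributing its $\partial^{-1}_x\partial_y$-content, $(1-\partial_x^2)^{-1}$ supplying the extra smoothing on the nonlinear term --- gives $\partial^{-1}_x\eta_y\in C([0,T_0];H^{s-1}_{-1}(\mathbb{R}^2))$.

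The one place I expect real work is the bookkeeping of the nonlocal operator $\partial^{-1}_x$: one must check that the low-frequency weight $(1+|\xi|^{-1})^2$ defining $H^s_{-1}$ is preserved by the unitary group $E(t)$ and is \emph{created} by the $\partial_x$ in front of the nonlinearity, and that the $\partial^{-1}_x\partial_y$- and $\partial^{-1}_x\partial^2_y$-content of $\eta$ is dominated by that of the data plus the $(1-\partial_x^2)^{-1}$ smoothing. Everything else --- the algebra property of $H^s$ for $s>1$, the unitarity of $E(t)$, and the standard time-contraction --- is routine, and the threshold $s>\tfrac32$ merely reflects the regularity the product estimates for the nonlocal terms comfortably require.
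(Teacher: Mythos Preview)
The paper does not prove this theorem at all; immediately before stating Theorems~1.1 and~1.2 it says ``For the proofs refer to \cite{BT} and \cite{BLT} respectively,'' so there is no in-paper argument to compare against. Your sketch is the standard contraction-mapping approach that \cite{BLT} in fact uses: recast \eqref{1.2} in Duhamel form, use that the linear flow has a real Fourier symbol and hence generates a unitary group on $H^s_{-1}$, invoke the algebra property of $H^s(\mathbb{R}^2)$ for $s>1$, and close the loop with the key symbol bound $(1+|\xi|^{-1})^2\xi^2(1+\xi^2)^{-2}\le C$ showing that $(1-\partial_x^2)^{-1}\partial_x$ maps $H^s$ into $H^s_{-1}$ (so the $\partial_x$ in front of the nonlinearity regenerates the low-frequency weight). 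That core is correct and aligns with the cited reference.

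One cautionary remark: the assertion $\partial_x^{-1}\eta_y\in C([0,T];H^{s-1}_{-1})$ is \emph{not} automatic from $\psi\in H^s_{-1}$ alone, since $(1+|\xi|^{-1})^2(1+\xi^2+\mu^2)^{s-1}\mu^2\xi^{-2}$ is not dominated by the $H^s_{-1}$ weight near $\xi=0$. You correctly flag this as the place needing ``real work''; in \cite{BLT} this is handled by building the $\partial_x^{-1}\partial_y$-content into the function space from the outset (or equivalently by an additional hypothesis on the data), and the statement as quoted in the present paper should be read in that spirit.
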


\section{Main Result}

Assume that $\psi \in H^s_{-1}(\mathbb{R}^2)$ with $s > \frac{3}{2}$, and let $u^+$ be the solution to the initial-value problem (\ref{1.1}) corresponding to the initial data $\phi^+$ where

$$\phi^+(\cdot)= \lim_{y \to +\infty} \psi(\cdot,y),$$

\noindent and $u^-$ be the solution corresponding to the initial data $\phi^-$ where

$$\phi^-(\cdot)= \lim_{y \to -\infty} \psi(\cdot,y).$$ 

For example, we could let $\psi(x,y) = \phi(x) + \sech y$, where $\phi \in H^s(\mathbb{R}).$

Let $\eta$ be the solution to the initial-value problem (\ref{1.2}) corresponding to the initial data $\psi.$

Define the function

\begin{equation} 
w(x,y,t) = \eta - \frac{1}{2} [u^{+} + u^{-}] - \frac{1}{2}[u^{+} - u^{-}] \tanh y.
\end{equation}

Observe that 

\begin{equation}
 w(x,y,0) = \psi(x,y) - \frac{1}{2} [\phi^{+}(x) + \phi^{-}(x)] - \frac{1}{2}[\phi^{+}(x) - \phi^{-}(x)] \tanh y
\end{equation}

and hence

$$\lim_{y \to \pm \infty} w(x,y,0)=0.$$

A straightforward calculation shows that $w$ satisfies the following initial value problem

{\small{
\begin{equation}
\begin{cases}
w_t + w_x -w_{xxt} - \partial^{-1}_x \eta_{yy} + w w_x + \frac{1}{2}(1 + \tanh y) (u^+ w)_x + \frac{1}{2}(1- \tanh y)(u^- w)_x& \\
 - \frac{1}{4} ( 1 - \tanh^2 y) ( u^+ u^+_x + u^- u^-_x - u^+u^-_x - u^-u^+_x) = 0, & \\
\\
w(x,y,0)= \psi(x,y) - \frac{1}{2}[ \phi^+ +\phi^-] - \frac{1}{2}[\phi^+ -\phi^-] \tanh y.&
\end{cases}
\label{mon}
\end{equation}}}

\bigskip

\begin{remark}

It should be noted that, $\partial^{-1}_x f$ is defined via the Fourier transform as 

$$\widehat{\partial^{-1}_x f} = \frac{1}{i\xi} \hat{f}(\xi,y).$$

Due to the singularity of the symbol $\frac{1}{\xi}$ at $\xi =0$, one requires that $\hat{f}(0,y)=0$
(the Fourier transform in the variable $x$), which is clearly equivalent to

$$\int_{\mathbb{R}}f(x,y) \,dx=0.$$

In what follows, $\partial^{-1}_x f\in L^2(\mathbb{R}^2)$ means there is an $L^2(\mathbb{R}^2)$ function $g$ such 
that $g_x=f$, at least in the distributional sense. We now state our main theorem.

\end{remark}

\begin{theorem}
Let $\phi^{\pm} \in H^k(\mathbb{R})$ be such that $\phi^\pm(\cdot)= \lim_{y \to \pm\infty} \psi(\cdot,y)$ where $\psi \in H^s_{-1}(\mathbb{R}^2)$ 
with $k\geq 1$ and $s\geq k+1.$ Also, let $u^\pm$ and $\eta$ both be solutions of (\ref{1.1}) and (\ref{1.2}) respectively which are emanating from Theorem 1.1 and 1.2 respectively. If

$$\lim_{y \rightarrow \pm \infty}||\psi(\cdot,y)-\phi^\pm(\cdot)||_{H^k_x(\mathbb{R})}=0,$$
then
$$\lim_{y \rightarrow \pm \infty}||\eta -u^\pm||_{H^k_x(\mathbb{R})}=0.$$
\end{theorem}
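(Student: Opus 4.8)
The strategy is to estimate the auxiliary function $w$ defined in (2.1) in the norm of $C([0,T];H^k_x(\mathbb{R}))$, uniformly in $y$, and to show that this quantity tends to $0$ as $y\to\pm\infty$. Since $\eta - u^\pm = w + \tfrac12(u^+ + u^-) - u^\pm + \tfrac12(u^+-u^-)\tanh y = w \mp \tfrac12(u^+ - u^-)(1 \mp \tanh y)$, and since $1 \mp \tanh y \to 0$ as $y\to\pm\infty$ while $u^+ - u^-$ is fixed in $C([0,T];H^k_x)$, it suffices to control $\|w(\cdot,y,t)\|_{H^k_x}$. First I would fix $T = \min(T(\|\phi^+\|_{H^k}), T(\|\phi^-\|_{H^k}), T_0)$ using Theorems 1.1 and 1.2, so that $u^\pm$ and $\eta$ all exist on $[0,T]$ with uniform bounds; note $s \ge k+1$ guarantees $\eta$ has enough $x$-regularity to sit in $H^k_x$ for a.e.\ $y$.

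Next I would derive an energy identity for $w$ from the equation (2.3). Apply $\partial_x^j$ for $0 \le j \le k$, multiply by $\partial_x^j w$, integrate in $x$ over $\mathbb{R}$ (for fixed $y$ and $t$), and sum. The term $w_x - w_{xxt}$ produces $\tfrac12\tfrac{d}{dt}(\|\partial_x^j w\|_{L^2}^2 + \|\partial_x^{j+1} w\|_{L^2}^2)$, i.e.\ the $H^{j+1}$-type energy; combining over $j$ one gets $\tfrac{d}{dt}\|w\|^2$ where $\|\cdot\|$ is equivalent to the $H^{k}_x$ (actually $H^{k+1}_x$ from the BBM structure, but one only needs $H^k_x$ for the conclusion) norm — the regularizing term $-w_{xxt}$ is exactly what makes the BBM energy method work without derivative loss. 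The nonlocal term $-\partial_x^{-1}\eta_{yy}$ must be handled by noting $\eta_{yy} = w_{yy} + \tfrac12(u^+-u^-)(\tanh y)_{yy}$ and that $\partial_x^{-1}$ acts only in $x$; one pairs it against $\partial_x^{2j}w$ and uses that $\partial_x^{-1}$ is bounded on the relevant frequency-localized pieces, or integrates by parts to move a derivative. The genuinely nonlinear terms $ww_x$ and $(u^\pm w)_x$ are estimated by standard Sobolev/Moser inequalities (using $H^k_x$ is an algebra for $k\ge 1$): they contribute terms bounded by $C(\|u^\pm\|_{H^k_x} + \|w\|_{H^k_x})\|w\|_{H^k_x}^2$, hence feed a Gronwall argument.

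The crucial point is the forcing term $-\tfrac14(1-\tanh^2 y)(u^+u^+_x + u^-u^-_x - u^+u^-_x - u^-u^+_x)$, which is the only term not quadratic in $w$. Its $H^k_x$ norm is bounded by $C(1-\tanh^2 y)\,(\|u^+\|_{H^k_x} + \|u^-\|_{H^k_x})^2 \le C e^{-2|y|}$ uniformly on $[0,T]$, so it is an exponentially small source. Similarly the extra pieces coming from $\partial_x^{-1}[(u^+-u^-)(\tanh y)_{yy}]$ and from writing the $\tanh$-weighted convection terms carry factors that vanish as $y\to\pm\infty$. Assembling the energy inequality gives, for each fixed $y$,
\[
\frac{d}{dt}\|w(\cdot,y,t)\|_{H^k_x}^2 \le C\big(\|w\|_{H^k_x}^2 + \|w\|_{H^k_x}^3\big) + R(y),
\]
with $R(y)\to 0$ as $y\to\pm\infty$ and $\|w(\cdot,y,0)\|_{H^k_x}\to 0$ by hypothesis. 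A standard continuation/Gronwall argument then yields $\sup_{[0,T]}\|w(\cdot,y,t)\|_{H^k_x}^2 \le C'(\|w(\cdot,y,0)\|_{H^k_x}^2 + R(y))\, e^{C'T} \to 0$.

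**Main obstacle.** The delicate step is controlling the nonlocal term $\partial_x^{-1}\eta_{yy}$ in the energy estimate: one must justify that $\partial_x^{-1}$ applied to the relevant combination lands in $L^2_x$ with a bound uniform in $y$, which is where the $H^s_{-1}$ (rather than plain $H^s$) regularity of $\psi$ from Theorem 1.2 is essential, and one must handle the $y$-derivatives falling on $w$ versus on the explicit $\tanh y$ profile. Care is also needed to show the decomposition $\eta = w + \tfrac12(u^++u^-) + \tfrac12(u^+-u^-)\tanh y$ is consistent with the regularity classes so that all integrations by parts in $x$ are legitimate for a.e.\ $y$ and a.e.\ $t$.
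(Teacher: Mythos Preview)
Your overall strategy---derive an $H^k_x$ energy inequality for the auxiliary function $w$, identify the forcing terms carrying factors of $(1-\tanh^2 y)$ or otherwise vanishing as $|y|\to\infty$, and close by Gronwall---is exactly what the paper does. The paper obtains the linear differential inequality $\frac{d}{dt}\|w\|_{H^k_x}\le (C_\eta+C_1)+(C_++C_-)\|w\|_{H^k_x}$ with $C_\eta=\|\partial_x^{k-1}\eta_{yy}\|_{L^2_x}$, $C_1=(1-\tanh^2y)\,a(\|u^+\|,\|u^-\|)$, $C_\pm=\tfrac12(1\pm\tanh y)\|u^\pm\|_{H^1}$, then applies Gronwall and sends $y\to\pm\infty$. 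Your inclusion of the cubic term $\|w\|_{H^k_x}^3$ (from $ww_x$ at the top order) is in fact more honest than the paper, which silently drops that contribution in the $H^k$ estimate; your bootstrap/continuation remark handles it.

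The one step where your plan diverges, and where there is a real issue as written, is the nonlocal term. You propose to decompose $\eta_{yy}=w_{yy}+\tfrac12(u^+-u^-)(\tanh y)''$ and then control $\partial_x^{-1}w_{yy}$. But the energy estimate is in $x$ only, for fixed $y$, so $w_{yy}$ is not controlled by $\|w\|_{H^k_x}$; and since $w_{yy}=\eta_{yy}-\tfrac12(u^+-u^-)(\tanh y)''$, the decomposition is circular. The paper avoids this entirely: it treats $\partial_x^{-1}\eta_{yy}$ (equivalently $\partial_x^{k-1}\eta_{yy}$ at level $k$) as an \emph{external} forcing, bounds its contribution by $\|w\|_{H^k_x}\|\partial_x^{k-1}\eta_{yy}\|_{L^2_x}$ via H\"older, and then uses the regularity coming from Theorem~1.2 (namely $\partial_x^{-1}\eta_y\in C([0,T];H^{s-1}_{-1}(\mathbb{R}^2))$, hence $\partial_x^{-1}\eta_{yy}$ belongs to an $L^2$-based space in $(x,y)$) to conclude that its $L^2_x$-trace tends to $0$ as $y\to\pm\infty$. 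Once you replace your decomposition by this direct ``$C_\eta(y)\to 0$'' argument, your proof coincides with the paper's.
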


\begin{proof}
We first estimate $||w||_{H^1_x(\mathbb{R})}$ for any $y \in \mathbb{R}$. Multiply equation (\ref{mon}) by $w$ and integrate over $\mathbb{R}$ in the spatial variable $x$
to obtain the following integral

$$\int_{\mathbb{R}} [w w_t + w w_x - w w_{xxt} -  w \partial^{-1}_x \eta_{yy} + w^2 w_x + w \frac{1}{2}(1 + \tanh y)  (u^+ w)_x +w \frac{1}{2}(1- \tanh y)(u^- w)_x $$

$$-w \frac{1}{4} ( 1 - \tanh^2 y) ( u^+ u^+_x + u^- u^-_x - u^+u^-_x - u^-u^+_x)]\,dx =0.$$

\bigskip
\bigskip
\bigskip
\bigskip

After a few integration by parts, we arrive at the following estimate

$$\frac{1}{2} \frac{d}{dt}  \Big[\int_{\mathbb{R}}w^2 dx + \int_{\mathbb{R}}w^2_x dx\Big] \leq \Big|\int_{\mathbb{R}} w \partial^{-1}_x \eta_{yy} dx \Big | + \frac{1}{2}\Big | \int_{\mathbb{R}} (1 + \tanh y) u^+w_xw dx \Big | $$

$$ + \frac{1}{2}\Big |\int_{\mathbb{R}}(1 - \tanh y) u^- w_xw  dx \Big |+ \frac{1}{4}\Big| \int_{\mathbb{R}} ( 1 - \tanh^2 y)w  u^+ u^+_x\,dx\Big|+ \frac{1}{4}\Big| \int_{\mathbb{R}} ( 1 - \tanh^2 y)w  u^- u^-_x dx \Big|$$

$$ +\frac{1}{4} \Big | \int_{\mathbb{R}}  ( 1 - \tanh^2 y) w  u^+ u^-_x dx \Big | + \frac{1}{4} \Big |\int_{\mathbb{R}} ( 1 - \tanh^2 y)w  u^- u^+_x dx \Big |.$$

\bigskip 
 
Making use of H$\ddot{o}$lders inequality, it follows that 
\bigskip

$$\frac{1}{2} \frac{d}{dt} || w ||^2_{H^1_x(\mathbb{R})} \leq ||w||_{L^2(\mathbb{R})}  ||\partial^{-1}_x \eta_{yy}||_{L^2(\mathbb{R})} + \frac{(1 + \tanh y)}{2}| u^+|_\infty||w||_{L^2(\mathbb{R})}||w_x||_{L^2(\mathbb{R})}$$

$$ + \frac{(1 - \tanh y)}{2} | u^-|_\infty||w||_{L^2(\mathbb{R})}||w_x||_{L^2(\mathbb{R})}+ \frac{( 1 - \tanh^2 y)}{4}  \text{ } | u^+|_\infty||w||_{L^2(\mathbb{R})}||u^+_x||_{L^2(\mathbb{R})}$$

$$ + \frac{( 1 - \tanh^2 y)}{4}  \text{ }  | u^-|_\infty||w||_{L^2(\mathbb{R})}||u^-_x||_{L^2(\mathbb{R})}+ \frac{( 1 - \tanh^2 y)}{4}  \text{ }  | u^+|_\infty||w||_{L^2(\mathbb{R})}||u^-_x||_{L^2(\mathbb{R})}$$

$$ + \frac{( 1 - \tanh^2 y)}{4}  \text{ }  | u^-|_\infty||w||_{L^2(\mathbb{R})}||u^+_x||_{L^2(\mathbb{R})}.$$

\bigskip
\bigskip
An application of Young's inequality yields
\bigskip
\bigskip

$$\frac{1}{2} \frac{d}{dt} || w ||^2_{H^1_x(\mathbb{R})} \leq ||w||_{H^1_x(\mathbb{R})}  ||\partial^{-1}_x \eta_{yy}||_{L^2(\mathbb{R})} +  \frac{(1 + \tanh y)}{2}| u^+|_\infty||w||^2_{H^1_x(\mathbb{R})} $$

$$ +  \frac{(1 - \tanh y)}{2}| u^-|_\infty||w||^2_{H^1_x(\mathbb{R})} + \frac{( 1 - \tanh^2 y)}{4}  \text{ } | u^+|_\infty||w||_{H^1_x(\mathbb{R})}||u^+_x||_{L^2(\mathbb{R})}$$

$$ + \frac{( 1 - \tanh^2 y)}{4} \text{ }  | u^-|_\infty||w||_{H^1_x(\mathbb{R})}||u^-_x||_{L^2(\mathbb{R})}+ \frac{( 1 - \tanh^2 y)}{4} \text{ }  | u^+|_\infty||w||_{H^1_x(\mathbb{R})}||u^-_x||_{L^2(\mathbb{R})}$$

$$ + \frac{( 1 - \tanh^2 y)}{4} \text{ }  | u^-|_\infty||w||_{H^1_x(\mathbb{R})}||u^+_x||_{L^2(\mathbb{R})}.$$

\bigskip
\bigskip
After utilizing an elementary embedding theorem, the last inequality can be written in the form
\bigskip
\bigskip

$$\frac{1}{2} \frac{d}{dt} || w ||^2_{H^1_x(\mathbb{R})} \leq \Big[ ||\partial^{-1}_x \eta_{yy}||_{L^2(\mathbb{R})}+ ( 1 - \tanh^2 y) a\Big(|| u^+ ||_{H^1(\mathbb{R})}, || u^-||_{H^1(\mathbb{R})}  \Big) \Big]|| w ||_{H^1_x(\mathbb{R})}$$

$$+\frac{1}{2}\Big[ (1 + \tanh y)||u^+||_{H^1(\mathbb{R})}  + (1 - \tanh y)||u^-||_{H^1(\mathbb{R})}  \Big]|| w ||^2_{H^1_x(\mathbb{R})},$$

\bigskip
\bigskip
or
\bigskip
\bigskip

$$ \frac{1}{2} \frac{d}{dt} || w ||^2_{H^1_x(\mathbb{R})} \leq (C_\eta + C_1)|| w ||_{H^1_x(\mathbb{R})} + (C_+ + C_-)||w||^2_{H^1_x(\mathbb{R})}.$$

\bigskip
\bigskip
From this the following inequality is derived
\bigskip
\bigskip

$$ \frac{d}{dt} || w ||_{H^1_x(\mathbb{R})} \leq (C_\eta + C_1) + (C_+ + C_-)||w||_{H^1_x(\mathbb{R})}.$$

\bigskip
\bigskip

By a variant of Gronwall's lemma, it follows that
\bigskip
\bigskip

 \begin{equation}
 || w ||_{H^1_x(\mathbb{R})}\leq || w(x,y,0) ||_{H^1_x(\mathbb{R})} e^{(C_+ + C_-)t} +  \frac{C_\eta +C_1}{C_+ + C_-}\Big(e^{(C_+ + C_-)t} -1\Big).
 \end{equation}

\bigskip
\bigskip

By letting $y \rightarrow + \infty$, we observe that $C_\eta =||\partial^{-1}_x \eta_{yy}||_{L^2(\mathbb{R})} \rightarrow 0$
since, for $s\geq 2$, $\partial^{-1}_x \eta_{yy} \rightarrow 0$ as $y \rightarrow \pm \infty.$ In addition, it follows that

$$C_1 =(1 - \tanh^2 y)a\Big(||u^+||_{H^1(\mathbb{R})}, ||u^-||_{H^1(\mathbb{R})} \Big) \rightarrow 0,$$

\medskip

since $(1 - \tanh^2 y) \rightarrow 0$ as $y \rightarrow \pm \infty$ and $a\Big(||u^+||_{H^1(\mathbb{R})}, ||u^-||_{H^1(\mathbb{R})} \Big) < \infty$ by the global well-posedness of the BBM.
Clearly $C_- =\frac{1}{2}(1 -\tanh y)||u^-||_{H^1(\mathbb{R})}\rightarrow 0$ as $y \rightarrow +\infty$, but $C_+$ does not.

\bigskip

Let $y \rightarrow + \infty$ on both sides of inequality (2.4), to conclude

\bigskip

$$\lim _{y\rightarrow + \infty} || w ||_{H^1_x(\mathbb{R})}\leq 0 \cdot e^{C_+t} + 0\cdot\Big(e^{C_+t} -1\Big)=0.$$

\bigskip

We have established that

\bigskip

$$\lim _{y\rightarrow + \infty} || w ||_{H^1_x(\mathbb{R})} =\lim_{y \rightarrow + \infty} || \eta(x,y,t) - u^{+} (x,t)||_{H^1_x(\mathbb{R})}=0.$$

\bigskip
\bigskip

A similar reasoning follows in the case when $y \rightarrow - \infty$, since $C_\eta \rightarrow 0$ 
and $C_1 \rightarrow 0.$ However, in this case $C_+ =\frac{1}{2}(1 +\tanh y)||u^+||_{H^1(\mathbb{R})} \rightarrow 0$, but $C_-$ does not. Hence,

$$\lim_{y \rightarrow - \infty} || \eta(x,y,t) - u^{-} (x,t)||_{H^1_x(\mathbb{R})}=\lim _{y\rightarrow - \infty} || w ||_{H^1_x(\mathbb{R})}\leq 0 \cdot e^{C_-t} + 0\cdot\Big(e^{C_-t} -1\Big)=0.$$

\bigskip

Therefore, 

\bigskip

$$\lim_{y \rightarrow \pm \infty} || \eta(x,y,t) - u^{\pm} (x,t)||_{H^1_x(\mathbb{R})}=0.$$

\bigskip
\bigskip
\bigskip
\bigskip
\bigskip

More generally, for $k \geq 1$, we apply the operator $\partial^k_x$ to both sides of the differential equation (\ref{mon}), multiply the result by $\partial^k_x w$,
and integrate the result over $\mathbb{R}$ in the spatial variable $x$. After a few integration by parts we arrive at the following integral equation

\bigskip
\bigskip

$$\frac{1}{2}\frac{d}{dt} \Big[ \int_{\mathbb{R}}   (\partial^k_xw)^2 \,dx + \int_{\mathbb{R}}  (\partial^{k+1}_xw)^2 \,dx \Big] = \int_{\mathbb{R}} \partial^k_xw \partial^{k-1}_x \eta_{yy}\,dx$$

$$+\frac{(-1)^{k}}{2}(1+ \tanh y) \int_{\mathbb{R}} \Big(\partial^{2k+1}_xw \Big)(u^+w) \,dx + \frac{(-1)^{k}}{2}(1-\tanh y) \int_{\mathbb{R}} \Big(\partial^{2k+1}_xw \Big)(u^-w) \,dx$$

$$+ \frac{(-1)^k}{4}( 1 - \tanh^2 y)\int_{\mathbb{R}} \Big(\partial^{2k}_x w \Big)(u^+ u^+_x)\,dx+ \frac{(-1)^k}{4}( 1 - \tanh^2 y)\int_{\mathbb{R}} \Big(\partial^{2k}_x w \Big)(u^- u^-_x)\,dx$$

$$ +\frac{(-1)^{k+1}}{4}( 1 - \tanh^2 y)\int_{\mathbb{R}} \Big(\partial^{2k}_x w \Big)(u^+ u^-_x)\,dx+\frac{(-1)^{k+1}}{4}( 1 - \tanh^2 y)\int_{\mathbb{R}} \Big(\partial^{2k}_x w \Big)(u^- u^+_x)\,dx.$$  

\bigskip
\bigskip

Similar to the case for $H^1$, the above equation delivers the bound

$$\frac{1}{2} \frac{d}{dt} \Big[|| \partial^k_xw||^2_{L^2(\mathbb{R})} + || \partial^{k+1}_xw||^2_{L^2(\mathbb{R})} \Big ] \leq\int_{\mathbb{R}} |\partial^k_xw \partial^{k-1}_x \eta_{yy} | \,dx  $$

$$+  \frac{( 1 + \tanh y)}{2} |u^+|_\infty  \int_{\mathbb{R}} \Big |\partial^k_x w \partial^{k+1}_xw \Big|\,dx +  \frac{( 1 - \tanh y)}{2}| u^-|_\infty\int_{\mathbb{R}} \Big |\partial^k_x w \partial^{k+1}_xw \Big |\,dx$$

$$ +  \frac{( 1 - \tanh^2 y)}{4} | u^+|_\infty\int_{\mathbb{R}} \Big |\partial^{k+1}_xw    \partial^k_x u^+ \Big |\,dx  +  \frac{( 1 - \tanh^2 y)}{4} | u^-|_\infty \int_{\mathbb{R}} \Big|\partial^{k+1}_xw    \partial^k_x u^- \Big |\,dx$$
 
$$+  \frac{( 1 - \tanh^2 y)}{4}| u^+|_\infty\int_{\mathbb{R}}\Big | \partial^{k+1}_xw   \partial^k_x u^-\Big | \,dx  +   \frac{( 1 - \tanh^2 y)}{4}| u^-|_\infty \int_{\mathbb{R}} \Big|\partial^{k+1}_xw    \partial^k_x u^+\Big| \,dx .$$

\bigskip
An appeal to  H$\ddot{o}$lders inequality results in

\bigskip
 
$$\frac{1}{2} \frac{d}{dt} \Big[|| \partial^k_xw||^2_{L^2(\mathbb{R})} + || \partial^{k+1}_xw||^2_{L^2(\mathbb{R})} \Big ] \leq ||\partial^k_xw||_{L^2(\mathbb{R})}  ||\partial^{k-1}_x \eta_{yy}||_{L^2(\mathbb{R})}$$

$$ + \frac{( 1 + \tanh y)}{2} | u^+|_\infty||\partial^k_xw||_{L^2(\mathbb{R})}||\partial^{k+1}_xw||_{L^2(\mathbb{R})} +  \frac{( 1 - \tanh y)}{2}| u^-|_\infty||\partial^k_xw||_{L^2(\mathbb{R})}||\partial^{k+1}_xw||_{L^2(\mathbb{R})}$$

$$+ \frac{( 1 - \tanh^2 y)}{4} \text{ } | u^+|_\infty||\partial^{k+1}_xw||_{L^2(\mathbb{R})}||\partial^k_xu^+||_{L^2(\mathbb{R})}$$

$$+ \frac{( 1 - \tanh^2 y)}{4}\text{ }  | u^-|_\infty||\partial^{k+1}_xw||_{L^2(\mathbb{R})}||\partial^k_xu^-||_{L^2(\mathbb{R})}$$

$$+ \frac{( 1 - \tanh^2 y)}{4}\text{ }  | u^+|_\infty||\partial^{k+1}_xw||_{L^2(\mathbb{R})}||\partial^k_xu^-||_{L^2(\mathbb{R})}$$

$$ + \frac{( 1 - \tanh^2 y)}{4}\text{ }  | u^-|_\infty||\partial^{k+1}_xw||_{L^2(\mathbb{R})}||\partial^k_xu^+||_{L^2(\mathbb{R})}.$$

\bigskip
After invoking Young's inequality, we have
\bigskip

$$ \frac{1}{2} \frac{d}{dt} \Big[|| \partial^k_xw||^2_{L^2(\mathbb{R})} + || \partial^{k+1}_xw||^2_{L^2(\mathbb{R})} \Big ] \leq \Big[|| \partial^k_xw||^2_{L^2(\mathbb{R})} + || \partial^{k+1}_xw||^2_{L^2(\mathbb{R})} \Big ]^\frac{1}{2}  ||\partial^{k-1}_x \eta_{yy}||_{L^2(\mathbb{R})}$$

$$ +  \frac{( 1 + \tanh y)}{2}| u^+|_\infty\Big[|| \partial^k_xw||^2_{L^2(\mathbb{R})} + || \partial^{k+1}_xw||^2_{L^2(\mathbb{R})} \Big ]$$

$$  + \frac{( 1 - \tanh y)}{2}| u^-|_\infty\Big[|| \partial^k_xw||^2_{L^2(\mathbb{R})} + || \partial^{k+1}_xw||^2_{L^2(\mathbb{R})} \Big ]$$

$$ + \frac{( 1 - \tanh^2 y)}{4} \text{ } | u^+|_\infty\Big[|| \partial^k_xw||^2_{L^2(\mathbb{R})} + || \partial^{k+1}_xw||^2_{L^2(\mathbb{R})} \Big ]^\frac{1}{2}||\partial^k_xu^+||_{L^2(\mathbb{R})}$$

$$ + \frac{( 1 - \tanh^2 y)}{4}\text{ }  | u^-|_\infty\Big[|| \partial^k_xw||^2_{L^2(\mathbb{R})} + || \partial^{k+1}_xw||^2_{L^2(\mathbb{R})} \Big ]^\frac{1}{2}||\partial^k_xu^-||_{L^2(\mathbb{R})}$$

$$+ \frac{( 1 - \tanh^2 y)}{4}\text{ }  | u^+|_\infty\Big[|| \partial^k_xw||^2_{L^2(\mathbb{R})} + || \partial^{k+1}_xw||^2_{L^2(\mathbb{R})} \Big ]^\frac{1}{2}||\partial^k_xu^-||_{L^2(\mathbb{R})}$$

$$ + \frac{( 1 - \tanh^2 y)}{4}\text{ }  | u^-|_\infty\Big[|| \partial^k_xw||^2_{L^2(\mathbb{R})} + || \partial^{k+1}_xw||^2_{L^2(\mathbb{R})} \Big ]^\frac{1}{2}||\partial^k_xu^+||_{L^2(\mathbb{R})}.$$

\bigskip
\bigskip
\bigskip

From which we deduce that

\bigskip

$$\frac{1}{2} \frac{d}{dt} || w ||_{H^k_x(\mathbb{R})}^2 \leq \Big[ ||\partial^{k-1}_x \eta_{yy}||_{L^2(\mathbb{R})}+( 1 - \tanh^2 y) a\Big(||\partial^k_xu^+||_{H^1(\mathbb{R})}, ||\partial^k_xu^-||_{H^1(\mathbb{R})} \Big)\Big]|| w ||_{H^k_x(\mathbb{R})}$$

$$+\frac{1}{2}\Big[ (1 + \tanh y)||u^+||_{H^1(\mathbb{R})}+ ( 1 - \tanh y)||u^-||_{H^1(\mathbb{R})}\Big]|| w ||_{H^k_x(\mathbb{R})}^2.$$

\bigskip
\bigskip
This leads us to the following inequality
\bigskip
\bigskip
$$ \frac{d}{dt} || w ||_{H^k_x(\mathbb{R})} \leq (C_\eta + C_1) + (C_+ + C_-)||w||_{H^k_x(\mathbb{R})}.$$

\bigskip
\bigskip
Proceeding with the variant of Gronwall's lemma, it follows that

\bigskip
\bigskip

 \begin{equation}
 || w ||_{H^k_x(\mathbb{R})}\leq || w(x,y,0) ||_{H^k_x(\mathbb{R})} e^{(C_+ + C_-)t} +  \frac{C_\eta +C_1}{C_+ + C_-}\Big(e^{(C_+ + C_-)t} -1\Big).
 \end{equation}

Similarly as in the case of $H^1$, we let $y \rightarrow \pm \infty$ on both sides of inequality (2.5), to conclude

\bigskip

$$\lim _{y\rightarrow \pm \infty} || w ||_{H^k_x(\mathbb{R})}\leq 0 \cdot e^{C_{\pm}t} + 0\cdot\Big(e^{C_{\pm}t} -1\Big)=0.$$

\bigskip
\bigskip

Where all of the constants vanish except for $C_+$ as $y\rightarrow + \infty$ also, as $y\rightarrow - \infty$, $C_-$ does not vanish.
Placing this together, we conclude that

\bigskip

$$\lim _{y\rightarrow \pm \infty} || w ||_{H^k_x(\mathbb{R})} =\lim_{y \rightarrow \pm \infty} || \eta(x,y,t) - u^{\pm} (x,t)||_{H^k_x(\mathbb{R})}=0.$$

\end{proof}

\end{document}